\date{\today}
\theoremstyle{definition}
\newtheorem{theorem}{Theorem}[section]
\newtheorem{theorem*}{Theorem}
\newtheorem{corollary}[theorem]{Corollary}
\theoremstyle{definition}
\theoremstyle{definition}
\newtheorem{definition}[theorem]{Definition}
\newtheorem*{definition*}{Definition}
\begin{document}

\newcommand{\Pos}{Pos}
\renewcommand{\Im}{\operatorname{Im}}

\newcommand{\Hess}{\operatorname{Hess}}
\newcommand{\Id}{\operatorname{Id}}
\newcommand{\Sym}{\operatorname{Sym}}
\newcommand{\vol}{\operatorname{vol}}
\newcommand{\Hom}{\operatorname{Hom}}

\newcommand{\argmin}{\operatorname{argmin}}
\newcommand{\diag}{\operatorname{diag}}

\renewcommand{\Pos}{\operatorname{Pos}}
\newcommand{\calPos}{\mathcal P}
\newcommand{\Int}{\operatorname{Int}}
\setcounter{tocdepth}{1}

\begin{abstract}
We describe the harmonic interpolation of convex bodies, and prove a strong form of the Brunn-Minkowski inequality and characterize its equality case.  As an application we improve a theorem of Berndtsson on the volume of slices of a pseudoconvex domain.  We furthermore apply this to prove subharmonicity of the expected absolute value of the determinant of a matrix of random vectors through the connection with zonoids.
\end{abstract}

%\title[{On the harmonic interpolation of convex sets and zonoids} ]{On the harmonic interpolation of convex sets and a Brunn-Minkowski Theorem for Random Determinants} 

\title[Harmonic Interpolation, Zonoids, Random Determinants]{Harmonic Interpolation and a Brunn-Minkowski Theorem for Random Determinants}

\author{Julius  Ross}
\author{David Witt Nystr\"om}

\address{Mathematics Statistics and Computer Science, University of Illinois at Chicago, Chicago  IL, USA}
\email{juliusro@uic.edu}

\address{Department of Mathematical Sciences, Chalmers University of Technology and the University of Gothenburg, Sweden}
\email{wittnyst@chalmers.se, danspolitik@gmail.com}

\subjclass[2020]{32J27, 52A40, 52A21 (Primary) 32U05, 14C17, 52A40 (Secondary)}

\maketitle

\section{Introduction}

Let $A$ and $B$ be convex subsets of $\mathbb{R}^n$. The Minkowski sum of $A$ and $B$ is defined as $$A+B:=\{a+b: a\in A,b\in B\},$$ and the famous Brunn-Minkowski inequality says that $$|A+B|^{1/n}\geq |A|^{1/n}+|B|^{1/n},$$ where $|\cdot|$ denotes the Euclidean volume.

We wish to consider the interpolation of convex sets.   Given convex $A$ and $B$ there is a natural interpolating family $A_t:=(1-t)A+tB$, $t\in[0,1]$, and it follows from the Brunn-Minkowski inequality that the map $$t\mapsto |A_t|^{1/n}$$ is concave in $t\in [0,1]$.

For an infinite family of convex sets, there are many possible  interpolations.  To consider this in more detail, suppose $\Omega$ is a smoothly bounded domain in $\mathbb R^m$ and that we have a continuous family of convex bodies (i.e.\ compact convex sets) $A_{\tau}\subset \mathbb R^n$ parametrized by $\tau\in \partial \Omega$.   If $\Omega$ is itself convex, a natural interpolation can be obtained by considering $$A = \operatorname{Convexhull}\left(\bigcup_{\tau\in \partial \Omega}A_{\tau}\times \{\tau\}\right)\subseteq \mathbb{R}^{n+m}$$ and letting $A_x$ be the fiber of $A$ over $x\in \Omega$.  We call this the \emph{convex interpolation} of $\{A_{\tau}\}$. Then directly from the Brunn-Minkowski inequality it follows that the map $x\mapsto |A_x|^{1/n}$ is concave in $x\in \Omega$.

If $\Omega$ is not convex, the convex interpolation is not suitable since it will not necessarily agree with the given boundary data $\{A_{\tau}\}$ on $\partial \Omega$.  For general $\Omega$ a natural interpolation was proposed in our recent paper \cite{ross2022interpolation} that we now describe.

First note that if $A_y$ is a continuous family of convex bodies in $\mathbb R^n$ over some parameter set $D\subseteq \mathbb{R}^m$ and $\mu$ is a Radon measure on $D$, then there is a set-integral $$\int_D A_yd\mu(y)$$ 
which is itself a subset of $\mathbb R^n$.   To define this precisely recall that the support function of a convex set $A$ is given by

$$h_A(\xi):=\sup_{\zeta\in A}(\zeta \cdot \xi)$$ with and has the property that
\begin{equation} h_A \text{ is convex and } h_A(t\xi)=|t|h_A(\xi) \text{ for } t\in \mathbb R.\label{eq:supportproperties}\end{equation}
On the other hand, if $h$ is a function with those two properties then $h$ is the support function of a unique closed convex set, which we denote by $A(h)$.

It is an elementary exercise to see that $h_{A+B}=h_A+h_B$ and more generally $$h_{t_1A_1+...+t_kA_k}=t_1h_{A_1}+...+t_kh_{A_k}.$$
Furthermore if $A_t$ are convex sets such that $A_t\to A$ in the Hausdorff topology, then for each $\xi$, $h_{A_t}(\xi)\to h_A(\xi)$.    

\begin{definition}
Let $d\mu$ be a measure on a measurable set $D$ in $\mathbb R^m$, and $A_y$ be a convex set for each $y\in D$.    We define the \emph{Minkowski integral} $\int_D A_yd\mu(y)$ as $$\int_D A_yd\mu(y):=A\left(\int_D h_{A_y}d\mu(y)\right).$$
\end{definition}

Such set-valued integrals have been considered in various places, for example \cite{Aumann65,Vitale91}.  As one would expect, some conditions are needed to ensure that the Minkowski integral is well-defined.  For our purpose the following is sufficient: assume $D$ is compact, $d\mu$ is a Radon measure and $y\mapsto A_y$ is a continuous family of convex bodies.  Then for each $\zeta$ the map $y\mapsto h_{A_y}(\zeta)$ is continuous, so $\int_D h_{A_y}d\mu(y)$ exists and has properties \eqref{eq:supportproperties}, and thus $\int_D A_y d\mu(y)$ exists.

%Using this we can describe the harmonic interpolation of a family $A_{\tau}$ over $\partial \Omega$.

\begin{definition}
Let $\Omega\subset \mathbb R^m$ be a smoothly bounded domain. The \emph{harmonic interpolation} of a continuous family $\{A_{\tau}\}_{\tau\in \partial \Omega}$ of convex bodies is defined as $$A_x:=\int_{\partial \Omega}A_{\tau}d\mu_x(\tau),$$ where $d\mu_x$ is the harmonic measure on $\partial \Omega$ with respect to $x\in \Omega$.
\end{definition}

The harmonic interpolation and convex interpolation may differ, even when $\Omega$ is convex.   We argue that the former is better suited in some contexts, one of which is the theory of zonoids.

A \emph{zonotope} is a convex set that can be written as the Minkowski sum of line segments. Clearly any zonotope is a convex polytope, but it is easy to see that not all convex polytopes are zonotopes. A \emph{zonoid} is a convex set which can be approximated arbitrarily well (in the Hausdorff topology) by zonotopes, or equivalently a convex set that can be written as the Minkowski integral of line segments (see for example \cite{Schneider-Weil-Zonoidsandrelatedtopics} for a introduction to zonoids). The harmonic interpolation has the property that it preserves zonoids; i.e. if each boundary set $A_{\tau}$ is a zonoid then each member of the interpolating family $A_x$ will also be a zonoid (and this is not true for the convex interpolation, even when $\Omega$ is convex).

\subsection*{Acknowledgements} This material is based upon work supported by the National Science Foundation under Grant No. DMS-1749447. The second named author is supported by the Swedish Research Council and the G\"oran Gustafsson Foundation for Research in Natural Sciences and Medicine. The authors thank Bo Berndtsson and Dario Cordero-Erausquin for conversations on this topic.

\section{A Brunn-Minkowski Inequality for Harmonic Interpolation}

We continue to assume $\Omega\subset \mathbb R^m$ is a smoothly bounded domain (which by convention is also bounded), and $A_{\tau}$ for $\tau\in \partial \Omega$ is a continuous family of convex bodies in $\mathbb R^n$.  In \cite{ross2022interpolation} we proved the following weak version of a Brunn-Minkowski inequality for the harmonic interpolation.

\begin{theorem}
If $A_x$ is the harmonic interpolation of $\{A_{\tau}\}$ then $x\mapsto \log|A_x|$ is superharmonic in $x$.
\end{theorem}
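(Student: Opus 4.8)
The plan is to reduce the statement to the classical Brunn–Minkowski inequality by exploiting that harmonic measure is obtained by averaging. Since harmonicity is a local and two-point condition, it suffices to show the sub-mean-value property: for each $x \in \Omega$ and each sufficiently small ball $B_r(x) \subset \Omega$,
$$\log|A_x| \geq \fint_{\partial B_r(x)} \log|A_z| \, d\sigma(z),$$
where $d\sigma$ is normalized surface measure. The key structural fact is the tower property of harmonic measure: $d\mu_x = \int_{\partial B_r(x)} d\mu_z \, d\sigma(z)$ on $\partial\Omega$. Applying this inside the definition of the Minkowski integral (using that $h_{A_\tau}(\zeta)$ is integrated linearly and that integration in $\tau$ and in $z$ can be swapped by Fubini for the continuous integrand) gives $h_{A_x} = \fint_{\partial B_r(x)} h_{A_z}\, d\sigma(z)$, i.e.
$$A_x = \fint_{\partial B_r(x)} A_z \, d\sigma(z)$$
as a Minkowski integral.

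The next step is a Brunn–Minkowski inequality for Minkowski integrals: if $z \mapsto A_z$ is a continuous family of convex bodies in $\mathbb{R}^n$ over a probability space $(Z,d\nu)$, then $\left| \int_Z A_z\, d\nu(z) \right|^{1/n} \geq \int_Z |A_z|^{1/n} \, d\nu(z)$, and in fact the stronger multiplicative form $\left| \int_Z A_z \, d\nu(z)\right| \geq \exp\left( \int_Z \log|A_z|\, d\nu(z)\right)$ holds. This is obtained from the finite case $|t_1 A_1 + \dots + t_k A_k| \geq |A_1|^{t_1}\cdots|A_k|^{t_k}$ for $t_i \geq 0$, $\sum t_i = 1$ — itself a standard consequence of iterating the two-set Brunn–Minkowski inequality and the concavity of $\log$ — followed by approximating the Minkowski integral by Riemann-type sums $\sum_i \nu(E_i) A_{z_i}$ and passing to the limit, using Hausdorff-continuity of $z\mapsto A_z$, continuity of volume under Hausdorff convergence, and the convergence of support functions noted in the excerpt. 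Combining this with the displayed identity $A_x = \fint_{\partial B_r(x)} A_z\, d\sigma(z)$ yields exactly the sub-mean-value inequality for $\log|A_x|$, and hence superharmonicity. (One should also record that $x \mapsto |A_x|$ is continuous, indeed that $x \mapsto A_x$ is Hausdorff-continuous, which follows from continuity of $x \mapsto d\mu_x$ in the weak topology together with the given boundary continuity; this is needed to know $\log|A_x|$ is a legitimate candidate for a superharmonic function and is locally bounded above so that the mean-value characterization applies.)

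The main obstacle I anticipate is the interchange of the two integrations — over $\partial\Omega$ (harmonic measure) and over $\partial B_r(x)$ (surface measure) — and the approximation argument justifying the Brunn–Minkowski inequality for Minkowski integrals. Neither is deep: the interchange is Fubini applied to the jointly continuous kernel $h_{A_\tau}(\zeta)$ against the product measure, and the approximation is a routine equicontinuity/uniform-convergence argument on the compact sphere. A secondary subtlety is the degenerate case where some $A_z$ has empty interior (volume zero), so that $\log|A_z| = -\infty$ on a set of positive $\nu$-measure; here the inequality still holds (the right side is $-\infty$ or the average is finite and the multiplicative Brunn–Minkowski still applies with the convention $\log 0 = -\infty$), and one checks this does not obstruct superharmonicity since superharmonic functions are allowed to take the value $-\infty$. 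I would handle this by first proving the statement assuming all $A_\tau$ have nonempty interior and then remarking that the general case follows by the standard approximation $A_\tau \rightsquigarrow A_\tau + \epsilon B^n$ and monotone passage $\epsilon \to 0$.
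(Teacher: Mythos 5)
Your proof is correct and mirrors the argument the paper gives for the stronger Theorem~\ref{thm:main}: reduce to the sub-mean-value inequality via the tower property of harmonic measure, approximate the Minkowski integral over the sphere by a finite Minkowski sum, apply Brunn--Minkowski, and pass to the limit. The only difference is that you invoke the multiplicative form $|t_1A_1+\cdots+t_kA_k|\geq\prod_i|A_i|^{t_i}$ directly, whereas the paper establishes the sharper $1/n$-power inequality, from which the logarithmic version follows at once since $t\mapsto n\log t$ is concave and increasing.
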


Our main result in this short note is a direct proof of the following stronger version:

\begin{theorem} \label{thm:main}
If $A_x$ is the harmonic interpolation of $\{A_{\tau}\}$ then $x\mapsto |A_x|^{1/n}$ is superharmonic in $x$.
\end{theorem}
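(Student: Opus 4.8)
The plan is to realise $|A_x|^{1/n}$ as a pointwise infimum of functions that are \emph{harmonic} in $x$, and then invoke the fact that an infimum of superharmonic functions is superharmonic. The bridge between volume and harmonicity is the elementary theory of mixed volumes. Recall that for convex bodies $B,A\subset\mathbb R^n$ the mixed volume $V(B,\dots,B,A)$ (with $B$ repeated $n-1$ times) has the integral representation
$$n\,V(B,\dots,B,A)=\int_{S^{n-1}}h_A(\xi)\,dS_B(\xi),$$
where $dS_B$ is the surface area measure of $B$, and that Minkowski's first inequality
$$V(B,\dots,B,A)^n\ \ge\ |B|^{\,n-1}\,|A|$$
holds, with equality exactly when $A$ and $B$ are homothetic. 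Taking $B=A$ in the equality case, and rearranging, this yields the representation
$$|A|^{1/n}=\inf_{B}\ \frac{V(B,\dots,B,A)}{|B|^{(n-1)/n}},$$
the infimum being over all convex bodies $B\subset\mathbb R^n$ with non-empty interior. (One checks this also holds, with both sides equal to $0$, when $A$ is lower-dimensional; alternatively this degenerate case can be reached by approximation.)

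Next I would use the defining property of the harmonic interpolation: since $A_x:=A\!\left(\int_{\partial\Omega}h_{A_\tau}\,d\mu_x(\tau)\right)$, we have $h_{A_x}(\xi)=\int_{\partial\Omega}h_{A_\tau}(\xi)\,d\mu_x(\tau)$ for every $\xi$. Because $d\mu_x$ is harmonic measure and $\tau\mapsto h_{A_\tau}(\xi)$ is continuous on $\partial\Omega$, the function $x\mapsto h_{A_x}(\xi)$ solves the Dirichlet problem in $\Omega$ with that boundary data, and in particular is harmonic in $x$. Combining this with the integral representation of mixed volumes and Fubini's theorem,
$$V(B,\dots,B,A_x)=\frac1n\int_{S^{n-1}}\!\Big(\int_{\partial\Omega}h_{A_\tau}(\xi)\,d\mu_x(\tau)\Big)dS_B(\xi)=\int_{\partial\Omega}V(B,\dots,B,A_\tau)\,d\mu_x(\tau),$$
so for each fixed $B$ the function $x\mapsto V(B,\dots,B,A_x)$ is the harmonic extension of the continuous boundary function $\tau\mapsto V(B,\dots,B,A_\tau)$, hence harmonic in $x$.

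To conclude: for each convex body $B$ with non-empty interior the function $x\mapsto V(B,\dots,B,A_x)/|B|^{(n-1)/n}$ is harmonic, in particular superharmonic, and by the infimum formula above $x\mapsto|A_x|^{1/n}$ is the pointwise infimum of this family of functions over all such $B$. A pointwise infimum of superharmonic functions again satisfies the sub-mean-value (super-mean) inequality over spheres, so it is superharmonic as soon as it is lower semicontinuous and not identically $-\infty$; both hold here since $x\mapsto|A_x|^{1/n}$ is continuous and finite ($\{A_\tau\}$ being a continuous family of convex bodies, so is $\{A_x\}$). This proves the theorem.

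The conceptually new ingredient — harmonicity of $x\mapsto V(B,\dots,B,A_x)$ for fixed $B$ — drops out immediately from the Minkowski-linearity of mixed volumes in one slot together with the defining property of harmonic measure, so I do not expect it to be the obstacle. The only points I would be careful about are the facts borrowed from Brunn--Minkowski theory used as black boxes (the integral representation of mixed volumes and, crucially, the equality case of Minkowski's first inequality, which is what makes the infimum formula an equality rather than merely an inequality), and the handling of the degenerate situation in which some $A_x$ fails to be full-dimensional, which is either verified directly or circumvented by a routine approximation argument.
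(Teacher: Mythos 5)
Your argument is correct, and it is genuinely different from the one in the paper. The paper's proof works directly with the sub-mean-value property: it writes $\mu_x=\int_{\partial B_\epsilon(x)}\mu_y\,dS(y)$, deduces $A_x=\int_{\partial B_\epsilon(x)}A_y\,dS(y)$, discretizes $dS$ by atomic measures $\sum_i\lambda_i\delta_{y_i}$, applies the classical Brunn--Minkowski inequality to the finite Minkowski sum $\sum_i\lambda_i A_{y_i}$, and passes to the limit. Your proof instead encodes $|A|^{1/n}$ as the infimum, over reference bodies $B$, of the normalized mixed volume $V(B,\dots,B,A)/|B|^{(n-1)/n}$ (via Minkowski's first inequality), observes that $h_{A_x}(\xi)$ is harmonic in $x$ by the very definition of the harmonic interpolation, and hence that $x\mapsto V(B,\dots,B,A_x)$ is harmonic for each fixed $B$ by the surface-area-measure representation and Fubini; the conclusion is then that $|A_x|^{1/n}$ is an infimum of harmonic functions and, being continuous, is superharmonic. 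What each buys: the paper's route is more elementary and self-contained, needing only Brunn--Minkowski for finite sums plus a routine discretization; your route trades the approximation argument for the mixed-volume formalism and isolates a clean structural fact (harmonicity of $x\mapsto V(B,\dots,B,A_x)$, i.e.\ of all one-slot mixed volumes along the interpolation), which is conceptually pleasant and could be reused.

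Two small remarks. First, you do not actually need the equality \emph{characterization} of Minkowski's first inequality to get the infimum formula; you only need that $B=A$ attains the bound, which is the trivial identity $V(A,\dots,A)=|A|$. Second, the degenerate case where some $A_x$ is lower-dimensional deserves the extra care you flag: it can be handled by replacing each $A_\tau$ by $A_\tau+\delta B^n$ (a small Euclidean ball), applying the result, and letting $\delta\to 0$, using that the harmonic interpolation of $\{A_\tau+\delta B^n\}$ is $A_x+\delta B^n$ and continuity of volume. With that, the proof is complete.
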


\begin{proof}
Let $B_{\epsilon}(x)$ denote the Euclidean ball of radius $\epsilon$ centered at $x$.  We need to show that if $B_{\epsilon}(x)\subseteq \Omega$ then $$|A_x|^{1/n}\geq \int_{\partial B_{\epsilon}(x)}|A_y|^{1/n}dS(y),$$ where $dS$ denotes the normalized Euclidean surface measure on $\partial B_{\epsilon}(x)$.

A standard property of harmonic measures is that $$\mu_x=\int_{\partial B_{\epsilon}(x)}\mu_ydS(y),$$ and this clearly implies that $$A_x=\int_{\partial B_{\epsilon}(x)}A_ydS(y).$$   Now we approximate the surface measure $dS$ with a sequence of atomic measure $\nu_k=\sum_{i=1}^{N_k} \lambda_{i,k} \delta_{y_{i,k}}$ chosen so $\nu_k\to dS$ weakly as $k\to \infty$.  Then for $k$ sufficiently large $\sum_{i=1}^{N_k} \lambda_{i,k} A_{y_i}$ is arbitrarily close (in the Hausdorff distance) to $A_x$.  Thus for any $\epsilon>0$ and $k$ sufficiently large we have
\begin{eqnarray*}
|A_x|^{1/n}+ \epsilon \ge |\sum_{i=1}^{N_k} \lambda_{i,k} A_{y_{i,k}}|^{1/n}\geq \sum_{i=1}^{N_k} \lambda_{i,k}|A_{y_{i,k}}|^{1/n}\ge \int_{\partial B_{\epsilon}(x)}|A_y|^{1/n}dS(y) - \epsilon,
\end{eqnarray*}
where the second follows from the classical Brunn-Minkowski inequality. Letting $\epsilon\to 0$ we have $$|A_x|^{1/n}\geq \int_{\partial B_{\epsilon}(x)}|A_y|^{1/n}dS(y),$$ which completes the proof.
\end{proof}

%A look at the proof shows more, in that we only used the following property of the harmonic interpolation:

\begin{definition}
 We say that a continuous family of convex sets $A_x\subseteq \mathbb{R}^n$ over some domain $\Omega\subseteq \mathbb{R}^m$ is \emph{subharmonic} over $\Omega$ if whenever $B_{\epsilon}(x)\subseteq \Omega$ we have that $$A_x\supseteq\int_{\partial B_{\epsilon}(x)}A_ydS(y).$$
\end{definition} 

The then get the following Corollary of Theorem \ref{thm:main}.

\begin{corollary} \label{cor:sub}
If $A_x$ is subharmonic over $\Omega$ then $x\mapsto |A_x|^{1/n}$ is superharmonic.
\end{corollary}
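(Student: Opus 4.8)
The plan is to follow the proof of Theorem~\ref{thm:main} almost verbatim, inserting one extra step that exploits monotonicity of volume under inclusion. Recall that a continuous function $f$ on $\Omega$ is superharmonic precisely when it satisfies the sub-mean-value inequality $f(x)\geq \int_{\partial B_{\epsilon}(x)}f(y)\,dS(y)$ for every ball $B_{\epsilon}(x)\subseteq \Omega$. Since $x\mapsto A_x$ is a continuous family of convex bodies, $x\mapsto |A_x|$, and hence $x\mapsto |A_x|^{1/n}$, is continuous; so it suffices to verify this inequality for $f(x)=|A_x|^{1/n}$.

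Fix then a ball $B_{\epsilon}(x)\subseteq \Omega$. The subharmonicity hypothesis gives the inclusion $A_x\supseteq \int_{\partial B_{\epsilon}(x)}A_y\,dS(y)$, and since volume is monotone with respect to inclusion this yields $|A_x|^{1/n}\geq \big|\int_{\partial B_{\epsilon}(x)}A_y\,dS(y)\big|^{1/n}$. It therefore remains to establish
$$\Big|\int_{\partial B_{\epsilon}(x)}A_y\,dS(y)\Big|^{1/n}\geq \int_{\partial B_{\epsilon}(x)}|A_y|^{1/n}\,dS(y).$$
But this is exactly the content of the second half of the proof of Theorem~\ref{thm:main}: approximate the normalized surface measure $dS$ on $\partial B_{\epsilon}(x)$ weakly by atomic measures $\nu_k=\sum_i \lambda_{i,k}\delta_{y_{i,k}}$, observe that the finite Minkowski sums $\sum_i \lambda_{i,k}A_{y_{i,k}}$ converge in Hausdorff distance to $\int_{\partial B_{\epsilon}(x)}A_y\,dS(y)$, apply the classical Brunn-Minkowski inequality to those finite sums, and pass to the limit. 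Chaining the inequalities gives $|A_x|^{1/n}\geq \int_{\partial B_{\epsilon}(x)}|A_y|^{1/n}\,dS(y)$, which is the desired sub-mean-value property, so $x\mapsto|A_x|^{1/n}$ is superharmonic.

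The point worth flagging is that, unlike in Theorem~\ref{thm:main}, we no longer have the identity $A_x=\int_{\partial B_{\epsilon}(x)}A_y\,dS(y)$ supplied by the reproducing property of harmonic measure; here the inclusion is assumed, and monotonicity of volume under inclusion is precisely what bridges that gap. I do not anticipate a genuine obstacle: the Minkowski integral over $\partial B_{\epsilon}(x)$ is well-defined since $y\mapsto A_y$ is a continuous family of convex bodies over the compact set $\partial B_{\epsilon}(x)$, and the atomic approximation step is identical to the one already carried out in the proof of Theorem~\ref{thm:main}.
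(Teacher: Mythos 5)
Your proof is correct and is exactly the natural argument the paper intends when it labels this a corollary of Theorem~\ref{thm:main} (the paper gives no explicit proof of the corollary). The only ingredient beyond the proof of Theorem~\ref{thm:main} is monotonicity of volume under inclusion applied to the defining inclusion $A_x\supseteq\int_{\partial B_{\epsilon}(x)}A_y\,dS(y)$, which is precisely the point you flag.
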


As an application we can give a strengthening of the following theorem of Berndtsson \cite{Berndtsson_Prekopa}

\begin{theorem}\label{thm:B}
Let $U\subseteq \mathbb{C}^{n+m}$ be a pseudoconvex domain with the property that if $(x_1+iy_1,...,x_n+iy_n,w)\in U$ then $(x_1+iy'_1,...,x_n+iy'_n,w)\in U$ for all $y_1',\ldots,y_n'$, and let $U_w:=\{x\in \mathbb{R}^n: (x,w)\in U\}$. 

Then the map $w\mapsto -\log|U_w|$ is plurisubharmonic in $w$.
\end{theorem}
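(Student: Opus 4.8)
The plan is to translate the plurisubharmonicity of $w\mapsto -\log|U_w|$ into a statement about the support functions of the convex fibres $U_w$ and then feed that into Corollary \ref{cor:sub}. Since plurisubharmonicity may be tested on complex lines, it suffices to fix an affine holomorphic map $w\colon\overline\Delta\to\mathbb C^m$ and show that $\zeta\mapsto -\log|U_{w(\zeta)}|$ is subharmonic on the unit disc $\Delta$. Two preliminary remarks. First, intersecting $U$ with the complex slice $\{w=w_0\}$ produces the tube domain $U_{w_0}+i\mathbb R^n$, which is pseudoconvex; since a tube domain is pseudoconvex exactly when its base is convex, each $U_{w_0}$ is convex. (Using the imaginary--translation invariance one also checks that the set of $w$ for which $U_w$ is a nonempty bounded body is open and carries a continuous family $w\mapsto\overline{U_w}$ of convex bodies; off this locus $|U_w|=+\infty$ and there is nothing to prove.) Second, because shifting $z$ in the imaginary directions leaves $\Re(z\cdot\xi)$ unchanged, for $\xi\in\mathbb R^n$ we have $h_{U_w}(\xi)=\sup\{\,\Re(z\cdot\xi):(z,w)\in U\,\}$.

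The crux is the claim that for each $\xi\in\mathbb R^n$ the function $w\mapsto h_{U_w}(\xi)$ is plurisuperharmonic, and this is where pseudoconvexity is used. I would obtain it from Kiselman's minimum principle: the function $(z,w)\mapsto -\Re(z\cdot\xi)$ is pluriharmonic, hence plurisubharmonic, on the pseudoconvex domain $U$, and $U$ is invariant under translation of $z$ in the imaginary directions, so its fibrewise infimum $w\mapsto\inf\{\,-\Re(z\cdot\xi):(z,w)\in U\,\}=-h_{U_w}(\xi)$ is plurisubharmonic on the projection of $U$ to the $w$-space. Now unwind the definitions: since containment of convex bodies is detected on support functions and the support function of a Minkowski integral is the integral of the support functions, the assertion that $\zeta\mapsto h_{U_{w(\zeta)}}(\xi)$ is superharmonic for every $\xi$ is precisely the assertion that the family $\zeta\mapsto\overline{U_{w(\zeta)}}$ is subharmonic in the sense of the definition preceding Corollary \ref{cor:sub}.

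Given this, the conclusion is immediate. By Corollary \ref{cor:sub}, $\zeta\mapsto |U_{w(\zeta)}|^{1/n}$ is superharmonic on $\Delta$; composing with the convex decreasing function $t\mapsto -n\log t$ on $(0,\infty)$ shows that $\zeta\mapsto -\log|U_{w(\zeta)}|=-n\log(|U_{w(\zeta)}|^{1/n})$ is subharmonic on $\Delta$, and since the disc was arbitrary, $w\mapsto -\log|U_w|$ is plurisubharmonic. In fact this yields the stronger statement that $\zeta\mapsto |U_{w(\zeta)}|^{1/n}$ is superharmonic along every holomorphic disc --- which is the promised improvement of Berndtsson's theorem --- and the equality case of the Brunn--Minkowski inequality should give a corresponding rigidity statement. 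I expect the main obstacle to be the crux step above, i.e.\ recognising that the content of pseudoconvexity here is exactly the plurisuperharmonicity of the slice support functions; once the problem is cast as an infimum of the pluriharmonic functions $\Re(z\cdot\xi)$ over imaginary--translation--invariant fibres, Kiselman's minimum principle does the work, and the remaining points --- convexity and Hausdorff--continuity of the fibres, and handling the (pluripolar) locus where $|U_w|=+\infty$ via the standard extension theorem for plurisubharmonic functions --- are routine.
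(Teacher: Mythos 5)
Your argument is correct and matches the paper's approach essentially step for step: reduce to $m=1$ via complex lines, use pseudoconvexity plus the tube structure to get convexity of each $U_w$, translate subharmonicity of the family into plurisuperharmonicity of $w\mapsto h_{U_w}(\xi)$, obtain the latter from Kiselman's minimum principle, and feed the resulting subharmonic family into Corollary~\ref{cor:sub} (the paper actually proves the stronger statement that $-|U_w|^{1/n}$ is plurisubharmonic, from which $-\log|U_w|$ follows by convex composition exactly as you do). The one technical difference is in the Kiselman step: you apply the minimum principle directly to the pluriharmonic function $-\Re(z\cdot\xi)$, whereas the paper applies it to $\psi_R(x+iy,w)=\phi_R(x+iy,w)-x\cdot\xi$ built from a plurisubharmonic exhaustion $\phi$ of $U$ (with $\phi_R=\max(\phi-R,0)$) and then lets $R\to\infty$; this regularization forces the fiberwise infimum to be attained away from $\partial U$, which is the setting in which the simplest version of Kiselman's theorem applies, so if you cite a version of the minimum principle that handles arbitrary psh functions independent of $\Im z$ without further hypotheses your shortcut is fine, but you should be aware the standard reference may require the extra step.
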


%We now show the following strengthening:

\begin{theorem}
In the same setting as Theorem \ref{thm:B}, the map $w\mapsto -|U_w|^{1/n}$ is plurisubharmonic.
\end{theorem}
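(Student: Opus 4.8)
The plan is to deduce the statement from Corollary~\ref{cor:sub}. First note that for each $w$ the slice $\{z:(z,w)\in U\}$ is a pseudoconvex tube domain in $\mathbb{C}^n_z$ (it is invariant under translation by $i\mathbb{R}^n$), hence a tube over a convex base; in particular $U_w\subseteq\mathbb{R}^n$ is convex. Since plurisubharmonicity is tested on complex lines, fix $w_0\in\mathbb{C}^m$ and $v\in\mathbb{C}^m$ and set $A_\zeta:=\overline{U_{w_0+\zeta v}}$ over the open set $D:=\{\zeta\in\mathbb{C}:U_{w_0+\zeta v}\neq\emptyset\}$; we must show $\zeta\mapsto|A_\zeta|^{1/n}$ is superharmonic on $D$. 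Restricting $U$ to the complex-affine subspace $\{w=w_0+\zeta v\}$ produces a pseudoconvex domain in $\mathbb{C}^n_z\times\mathbb{C}_\zeta$ still invariant under the imaginary translations in $z$, so there is no loss in assuming $m=1$; and replacing $U$ by $U\cap\{|\operatorname{Re} z|<R\}$ (a tube over the convex ball $\{|x|<R\}$, hence again pseudoconvex with the same symmetry) and letting $R\to\infty$ at the end --- a decreasing limit of plurisubharmonic functions is plurisubharmonic --- we may moreover assume all slices $U_w$ are bounded, so that the $A_\zeta$ are convex bodies depending continuously on $\zeta$ in the Hausdorff metric.

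The key input is the identity
\[
h_{U_w}(\xi)=\sup\{\operatorname{Re}(z\cdot\xi):(z,w)\in U\},\qquad\xi\in\mathbb{R}^n,
\]
valid because $\operatorname{Re}(z\cdot\xi)=\operatorname{Re}(z)\cdot\xi$ is unchanged under the imaginary translations preserving $U$, so the supremum over $(z,w)\in U$ collapses to $\sup_{x\in U_w}x\cdot\xi$. Hence $-h_{U_w}(\xi)=\inf\{-\operatorname{Re}(z\cdot\xi):(z,w)\in U\}$ is a fibrewise infimum, over the pseudoconvex domain $U$, of the pluriharmonic function $(z,w)\mapsto-\operatorname{Re}(z\cdot\xi)$; this function depends on $z$ only through $\operatorname{Re} z$ and is linear in $\operatorname{Re} z$, while $U$ is a tube in the $\operatorname{Im} z$-directions and the infimum is locally bounded below (the slices $U_w$ remaining in a bounded region). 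Kiselman's minimum principle then shows that $w\mapsto-h_{U_w}(\xi)$ is plurisubharmonic, i.e.\ that $w\mapsto h_{U_w}(\xi)$ is superharmonic along every complex line, for each $\xi\in\mathbb{R}^n$. (The converse implication --- that such a tube is pseudoconvex as soon as all slice support functions $h_{U_w}(\xi)$ are plurisuperharmonic --- is elementary, since then $U$ is an intersection of the Hartogs-type domains $\{|e^{z\cdot\xi}|\,e^{-h_{U_w}(\xi)}<1\}$; this is essentially the mechanism behind Theorem~\ref{thm:B}.)

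It remains to recognise the previous paragraph as the assertion that $\{A_\zeta\}$ is a subharmonic family in the sense of the definition preceding Corollary~\ref{cor:sub}. Indeed, whenever $B_\epsilon(\zeta)\subseteq D$, the definition of the Minkowski integral gives $\int_{\partial B_\epsilon(\zeta)}A_{\zeta'}\,dS(\zeta')=A\bigl(\int_{\partial B_\epsilon(\zeta)}h_{A_{\zeta'}}\,dS(\zeta')\bigr)$, and since an inclusion of convex bodies is equivalent to the corresponding pointwise inequality of their support functions, this body lies inside $A_\zeta$ exactly when $\int_{\partial B_\epsilon(\zeta)}h_{A_{\zeta'}}(\xi)\,dS(\zeta')\le h_{A_\zeta}(\xi)$ for all $\xi$ --- precisely the super-mean-value inequality expressing superharmonicity of the continuous functions $\zeta'\mapsto h_{U_{w_0+\zeta'v}}(\xi)$ established above. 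Hence $\{A_\zeta\}$ is subharmonic over $D$, and Corollary~\ref{cor:sub} yields that $\zeta\mapsto|A_\zeta|^{1/n}$ is superharmonic, so $\zeta\mapsto-|U_{w_0+\zeta v}|^{1/n}$ is subharmonic on $D$; letting $R\to\infty$ and varying $w_0,v$ shows that $w\mapsto-|U_w|^{1/n}$ is plurisubharmonic.

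The one genuinely non-formal step is the passage from pseudoconvexity of $U$ to plurisuperharmonicity of the slice support functions: this is where Kiselman's minimum principle enters, and one must verify its hypotheses in the present normalisation --- the tube structure in the $\operatorname{Im} z$-directions, the convexity (here linearity) of the integrand in $\operatorname{Re} z$, and the local boundedness from below of the fibrewise infimum, which is the reason for first shrinking $U$ to $\{|\operatorname{Re} z|<R\}$. Everything else --- continuity of $\zeta\mapsto U_{w_0+\zeta v}$, the support-function reformulation of the subharmonic-family condition, and the appeal to Corollary~\ref{cor:sub} --- is routine.
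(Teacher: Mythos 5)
Your proof is correct and follows the same route as the paper: reduce to $m=1$ and bounded slices, show via Kiselman's minimum principle that $w\mapsto h_{U_w}(\xi)$ is superharmonic for each $\xi$ (equivalently, that the slice family is subharmonic in the sense preceding Corollary~\ref{cor:sub}), and then invoke Corollary~\ref{cor:sub}. The only real divergence is in how the two arguments domesticate Kiselman's minimum principle. The paper keeps the original domain $U$, chooses a plurisubharmonic exhaustion $\phi$ independent of $\Im z$, forms $\psi_R=\max(\phi-R,0)-x\cdot\xi$, applies Kiselman to $\psi_R$ on $U$, and lets $R\to\infty$; the truncated exhaustion both guarantees the hypotheses of Kiselman's theorem and makes the fibrewise infimum manifestly finite, with $-h_{U_w}(\xi)$ recovered as a decreasing limit. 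You instead truncate the domain to $U\cap\{|\operatorname{Re} z|<R\}$ (still pseudoconvex with the same tube symmetry and convex connected fibres), apply Kiselman directly to the pluriharmonic integrand $-\operatorname{Re}(z\cdot\xi)$, and let $R\to\infty$ at the very end using that a decreasing limit of plurisubharmonic functions is plurisubharmonic. This also makes the paper's rather terse ``by approximation we may assume $U_w$ bounded and continuous'' step concrete, which is a modest improvement in explicitness. Both implementations are valid; the paper's truncates the function, yours truncates the set, and the rest of the argument --- the identification $h_{U_w}(\xi)=\sup\{\operatorname{Re}(z\cdot\xi):(z,w)\in U\}$, the support-function reformulation of subharmonicity of the family, and the appeal to Corollary~\ref{cor:sub} --- coincides with the paper.
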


\begin{proof}
    Without loss of generality we can assume that $m=1$. Note that the pseudoconvexity and symmetry of $U$ implies that $U_w$ is convex for all $w$. By approximation we can also without loss of generality assume that the family $U_w$ is bounded and continuous. We claim that the family $U_w$ is subharmonic. Note that for two closed convex sets $A$ and $B$ we have that $A\supseteq B$ if and only if $h_A\geq h_B$, so $U_w$ is subharmonic if and only if for any $\xi\in \mathbb{R}^n$, $h_{U_w}(\xi)=\sup_{x\in U_w}(x\cdot \xi)$ is superharmonic in $w$. 
    
    Let $\phi$ be a plurisubharmonic exhaustion function for $U$ which we can assume to be independent of $\Im(\mathbb{C}^n)$, just as $U$ itself. Note that $\phi_R:=\max(\phi-R,0)$  is also plurisubharmonic and independent of $\Im(\mathbb{C}^n)$, and that the same is true for $\psi_R(x+iy,w):=\phi_R(x+iy,w)-x\cdot \xi$. Thus by Kiselman's minimum principle $\inf_{x\in U_w}\psi_R(x,w)$ is subharmonic in $w$. We now note that $$\sup_{x\in U_w}(x\cdot \xi)=-\lim_{R\to\infty}\inf_{x\in U_w}\psi_R(x,w),$$ and hence it follows that $h_{U_w}(\xi)$ is superharmonic and thus $U_w$ is subharmonic. That $-|U_w|^{1/n}$ is subharmonic now follows from Corollary \ref{cor:sub}. 
\end{proof}

\section{Characterization of the extremal case}

%A characterization of subharmonic families such that $|A_x|^{1/n}$ is harmonic.
%
%Assume that $A_x$ is a subharmonic family of convex sets in $\mathbb{R}^n$ over some domain $\Omega\subseteq \mathbb{R}^m$. 

By Corollary \ref{cor:sub} we know that if $\{A_x\}_{x\in \Omega}$ is a subharmonic family of convex bodies in $\mathbb R^n$ over a domain $\Omega$ then $x\mapsto |A_x|^{1/n}$ is superharmonic.   Our next result characterizes when this map is in fact harmonic.

\begin{theorem}
The map $x\mapsto |A_x|^{1/n}$ is harmonic if and only if we can write $A_x=c_xB+d_x$ where $B\subset \mathbb R^n$ is a fixed convex body, and $c_x$ and $d_x$ are harmonic functions on $\Omega$ taking values in $\mathbb{R}_+$ and $\mathbb{R}^n$ respectively. 
\end{theorem}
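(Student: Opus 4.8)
The plan is to reduce the problem to the equality case of the classical Brunn–Minkowski inequality applied along small circles/spheres, and then bootstrap the resulting infinitesimal homothety information into a global statement. The ``if'' direction is straightforward: if $A_x = c_x B + d_x$ with $c_x$ harmonic and positive and $d_x$ harmonic, then $|A_x|^{1/n} = c_x |B|^{1/n}$ is harmonic (and one checks the family is subharmonic, indeed harmonic, using $h_{A_x}(\xi) = c_x h_B(\xi) + d_x\cdot\xi$, which is harmonic in $x$ for each fixed $\xi$). So the content is the ``only if'' direction.

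For the ``only if'' direction, suppose $x\mapsto |A_x|^{1/n}$ is harmonic. First I would combine this with the superharmonicity inequality from Corollary~\ref{cor:sub} (and its proof): for every ball $B_\epsilon(x)\subseteq\Omega$ we have simultaneously $|A_x|^{1/n} = \int_{\partial B_\epsilon(x)} |A_y|^{1/n}\,dS(y)$ and $A_x \supseteq \int_{\partial B_\epsilon(x)} A_y\,dS(y)$. Running the atomic-approximation argument from the proof of Theorem~\ref{thm:main} in reverse: since $|A_x|^{1/n}$ equals the average, the Brunn–Minkowski inequalities used there must be equalities in the limit. The classical Brunn–Minkowski equality case says $|\lambda_1 A_1 + \cdots + \lambda_k A_k|^{1/n} = \sum \lambda_i |A_i|^{1/n}$ (with all $A_i$ of positive volume) forces all the $A_i$ to be homothetic to one another. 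Passing this through the approximation, I would conclude that for $y, y'$ on a common small sphere $\partial B_\epsilon(x)$, the bodies $A_y$ and $A_{y'}$ are homothetic; since $\Omega$ is connected and these local homothety relations overlap, \emph{all} the $A_x$, $x\in\Omega$, are homothetic to a single fixed convex body $B$. Hence $A_x = c_x B + d_x$ for some functions $c_x > 0$ and $d_x\in\mathbb{R}^n$; equivalently $h_{A_x}(\xi) = c_x h_B(\xi) + d_x\cdot\xi$ for all $\xi$.

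It remains to show $c_x$ and $d_x$ are harmonic. Here I would use the subharmonicity of the family: for each fixed $\xi$, $h_{A_x}(\xi)$ is superharmonic in $x$ (equivalently $h_{A_x}\geq \int_{\partial B_\epsilon} h_{A_y}\,dS$, which is what subharmonicity of $\{A_x\}$ means). Choosing $\xi$ so that $h_B(\xi) = 1$ and $\xi$ lies in suitable directions (e.g.\ $\pm e_j$), one extracts that each coordinate of $d_x$, and $c_x$ itself, is a difference/combination of superharmonic functions built from the $h_{A_x}(\xi)$; so each is simultaneously superharmonic and subharmonic, hence harmonic. More carefully: $h_{A_x}(\xi) = c_x h_B(\xi) + d_x\cdot\xi$ is superharmonic for every $\xi$; applying this to $\xi$ and $-\xi$ and adding gives $c_x(h_B(\xi)+h_B(-\xi))$ superharmonic, i.e.\ $c_x$ superharmonic (as $h_B(\xi)+h_B(-\xi) > 0$ for $\xi\neq 0$ when $B$ has nonempty interior); but $|A_x|^{1/n} = c_x|B|^{1/n}$ is harmonic, so $c_x$ is harmonic. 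Then $d_x\cdot\xi = h_{A_x}(\xi) - c_x h_B(\xi)$ is superharmonic for every $\xi$, and replacing $\xi$ by $-\xi$ shows $d_x\cdot\xi$ is also subharmonic, hence $d_x$ is harmonic.

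The main obstacle I anticipate is making the passage through the atomic approximation rigorous in the equality case: the inequalities in the proof of Theorem~\ref{thm:main} involve an $\epsilon$ of slack coming from the Hausdorff approximation, so one does not get exact equality at the discrete level but only in the limit. To handle this cleanly I would instead argue directly at the level of the identity $A_x = \int_{\partial B_\epsilon(x)} A_y\,dS(y)$ using a continuous (integral) form of the Brunn–Minkowski equality characterization — i.e.\ if $\nu$ is a probability measure on a space of convex bodies $\{A_y\}$ of positive volume and $|\int A_y\,d\nu|^{1/n} = \int |A_y|^{1/n}\,d\nu$, then $\nu$-a.e.\ $A_y$ is homothetic to a fixed body. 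One also must be slightly careful about degenerate fibers (bodies of zero volume or empty interior), but since $|A_x|^{1/n}$ is harmonic and would have to vanish on an open set if it vanished anywhere while being a nonnegative average, either all $A_x$ have positive volume or the statement is vacuous on that component; on a component where all have positive volume the argument above goes through. A secondary technical point is ensuring the single body $B$ can be chosen globally on a connected $\Omega$: this follows because ``homothetic to'' is an equivalence relation and the local sphere-averaging relations connect any two points of $\Omega$ by a chain.
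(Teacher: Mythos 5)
Your proposal tracks the paper's overall strategy — use harmonicity of $|A_x|^{1/n}$ together with the subharmonicity of the family to force $A_x$ to \emph{equal} a Minkowski average of its neighbours, then invoke the equality case of Brunn--Minkowski to get homothety, and finally upgrade to harmonicity of the coefficients $c_x,d_x$ — but you depart from the paper's proof in two genuine places.

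First, where you invoke a ``continuous (integral) form of the Brunn--Minkowski equality characterization'' (if $|\int A_y\,d\nu|^{1/n}=\int|A_y|^{1/n}\,d\nu$ then $\nu$-a.e.\ $A_y$ is homothetic to a fixed body), the paper does not cite such a lemma; it \emph{proves} it inline. It partitions $\partial\Omega'$ into finitely many $D_i$, forms $B_i=\int_{D_i}A_y\,d\mu_x$, applies the finite Brunn--Minkowski equality case to $A_x=\sum_i B_i$ to get $B_i=c_iB+d_i$, and then refines the partition $D_i$ into $E_j$ and passes to the limit using continuity of $y\mapsto A_y$. This is exactly the content of the integral equality statement you need, so your argument is sound in spirit but leaves that step as a black box; if you want a self-contained proof you should supply the refinement argument (or a reference). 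The chaining of a.e.\ homothety over overlapping spheres and the positivity/degeneracy issues you flag are handled correctly.

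Second, your derivation of harmonicity of $c_x$ and $d_x$ is genuinely different and arguably more elementary than the paper's. You observe that subharmonicity of the family means each $h_{A_x}(\xi)$ is superharmonic, then pair $\xi$ with $-\xi$: the odd part (giving $d_x\cdot\xi$) is both super- and subharmonic hence harmonic, and the even part shows $c_x$ superharmonic, which combined with harmonicity of $c_x|B|^{1/n}=|A_x|^{1/n}$ forces $c_x$ harmonic. The paper instead first shows $A_x$ coincides with the harmonic interpolation of its boundary values on each $\Omega'\Subset\Omega$, reads off $A_y=c_yB+d_y$ on $\partial\Omega'$ by the refinement argument, normalizes $B$ (volume one, centroid at origin) so the decomposition is unique, and concludes that $c_x,d_x$ are the harmonic extensions of $c_y,d_y$. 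Your $\xi/(-\xi)$ argument avoids having to invoke uniqueness of the decomposition or the structure of the harmonic interpolation, and is a nice simplification; it is worth including. (One small point: you should fix the normalization of $B$ before writing $A_x=c_xB+d_x$, since otherwise $c_x$ and $d_x$ are not well-defined functions of $x$ — the paper normalizes $B$ to have volume one and centroid at the origin precisely for this reason.)
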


\begin{proof}
Let $\Omega'$ be a relatively compact subdomain of $\Omega$ with smooth boundary. Since $A_x$ is subharmonic it must dominate the harmonic interpolation of $A_y$ restricted to $\partial \Omega'$, but since $|A_x|^{1/n}$ is assumed to be harmonic we must have that $A_x$ is equal to the harmonic interpolation.

Write $\partial \Omega'$ as the disjoint union of a finite number of measurable subsets $D_i$ and let $$B_i:=\int_{D_i}A_yd\mu_x(y),$$ where $\mu_x$ is the harmonic measure on $\partial \Omega'$ with respect to $x$. Then  $A_x=\sum_iB_i$, and by the Brunn-Minkowski inequality we have $$|A_x|^{1/n}\geq \sum_i|B_i|^{1/n}.$$ On the other hand, as in the proof of Theorem \ref{thm:main} one sees that $$|B_i|^{1/n}\geq \int_{D_i}|A_y|^{1/n}d\mu_x(y).$$ But $|A_x|^{1/n}$ being harmonic then implies the equality $$|A_x|^{1/n}= \sum_i|B_i|^{1/n}.$$ The well-known characterization of the equality case of the Brunn-Minkowski inequality then says that we can write $B_i=c_iB+d_i$, where $B$ is some fixed convex set, and some $c_i\in \mathbb{R}_+$ and $d_i\in \mathbb{R}^n$. We may normalize $B$ to have volume one and center of gravity at the origin. We thus also see that $A_x=c_xB+d_x$ where $c_x=\sum_ic_i$ and $d_x=\sum_i d_i$. 

Now if we decompose a fixed $D_i$ further into disjoint pieces $E_j$ the same argument yields that for each $j$ there are $c'_j\in \mathbb{R}_+$ and $d'_j\in \mathbb{R}^n$ such that $\int_{E_j}A_ydS(y)=c'_jB+d'_j$. As we can make the decomposition arbitrarily fine the continuity of $A_y$ implies that there are continuous functions $c_y$ and $d_y$ on $\partial \Omega'$ such that $A_y=c_yB+d_y$. It follows that $A_x=c_xB+d_x$ where $c_x$ is the harmonic extension of $c_y$ and $d_x$ is the harmonic extension of $d_y$ to $\Omega'$.

As this can be done for any for relatively compact subdomain of $\Omega$ with smooth boundary, the result follows. 
\end{proof}

\section{A Brunn-Minkowski theorem for expected absolute random determinants}

Consider now a random $(n,n)$ matrix $M_Y$ whose columns are iid copies of a random vector $Y$, corresponding to a Borel probability measure $\nu_v$ on $\mathbb{R}^n$. We are then interested in the expected absolute value of the determinant (ead) $E|\det M_Y|$ of $M_Y$.

Suppose $Y_{\tau}$ is a family of random vectors parametrized by the boundary of a smoothly bounded domain $\Omega\subseteq \mathbb{R}^m$.  We assume that each $Y_{\tau}$ has finite expectation.  Then a natural interpolating family $Y_x$ over $\Omega$ is given by letting $$\nu_{Y_x}:=\int_{\partial \Omega}\nu_{Y_{\tau}}d\mu_x(\tau),$$ where as before $d\mu_x$ denotes the harmonic measure with respect to $x$.

\begin{theorem} \label{thm:random}
The map $x\mapsto (E|\det M_{Y_x}|)^{1/n}$ is superharmonic in $x$.   
\end{theorem}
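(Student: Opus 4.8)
The plan is to reduce the statement about expected absolute random determinants to the Brunn–Minkowski statement for harmonic interpolation (Theorem \ref{thm:main}) by passing through the theory of zonoids. Recall the classical identity expressing the expected absolute determinant as the volume of a zonoid: if $Y$ is a random vector in $\mathbb R^n$ with finite expectation and law $\nu_Y$, then the zonoid $Z_Y$ associated to $Y$, defined by its support function $h_{Z_Y}(\xi) = \tfrac12 E|Y\cdot\xi|$, satisfies $\vol(Z_Y) = \tfrac{1}{n!}\,E|\det M_Y|$, where $M_Y$ has iid columns distributed as $Y$. (This is the standard relationship between zonoids and random determinants; see \cite{Schneider-Weil-Zonoidsandrelatedtopics}.) So it suffices to show that $x\mapsto \vol(Z_{Y_x})^{1/n}$ is superharmonic, and for that by Corollary \ref{cor:sub} it suffices to show that the family of zonoids $x\mapsto Z_{Y_x}$ is subharmonic, i.e.\ that $x\mapsto h_{Z_{Y_x}}(\xi)$ is superharmonic for every fixed $\xi$.

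First I would check that the interpolated family $\nu_{Y_x} := \int_{\partial\Omega}\nu_{Y_\tau}\,d\mu_x(\tau)$ has finite expectation for each $x\in\Omega$ and that the resulting zonoids $Z_{Y_x}$ form a continuous family of convex bodies (using that $\Omega$ is bounded, $\mu_x$ is a probability measure, and the $Y_\tau$ have uniformly controlled first moments after a routine approximation/truncation reducing to the bounded case). The key computation is then the interchange
\[
h_{Z_{Y_x}}(\xi) = \tfrac12 E|Y_x\cdot\xi| = \tfrac12\int_{\mathbb R^n}|v\cdot\xi|\,d\nu_{Y_x}(v) = \tfrac12\int_{\partial\Omega}\Big(\int_{\mathbb R^n}|v\cdot\xi|\,d\nu_{Y_\tau}(v)\Big)d\mu_x(\tau) = \int_{\partial\Omega} h_{Z_{Y_\tau}}(\xi)\,d\mu_x(\tau),
\]
which says precisely that $Z_{Y_x}$ is the harmonic interpolation of the boundary zonoids $\{Z_{Y_\tau}\}$. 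Since harmonic measure reproduces values on subspheres, $\mu_x = \int_{\partial B_\epsilon(x)}\mu_y\,dS(y)$ whenever $B_\epsilon(x)\subseteq\Omega$, and hence $h_{Z_{Y_x}}(\xi) = \int_{\partial B_\epsilon(x)} h_{Z_{Y_y}}(\xi)\,dS(y)$; in particular $x\mapsto h_{Z_{Y_x}}(\xi)$ is harmonic, so certainly superharmonic, and the family $Z_{Y_x}$ is subharmonic. Corollary \ref{cor:sub} then gives that $x\mapsto \vol(Z_{Y_x})^{1/n}$ is superharmonic, and multiplying by the constant $(n!)^{1/n}$ yields the claim.

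The main obstacle I expect is purely technical rather than conceptual: justifying the Fubini-type interchange above and the continuity/finiteness of the interpolated family when the $Y_\tau$ are merely assumed to have finite expectation (so $\nu_{Y_\tau}$ need not be compactly supported). I would handle this by a two-step approximation — first truncate each $\nu_{Y_\tau}$ to a large ball so that the laws become uniformly compactly supported and the first moments converge, apply the argument above in that setting, and then pass to the limit using that $h_{Z_{Y_\tau}}(\xi)\le \tfrac12 E|Y_\tau|\,|\xi|$ is uniformly bounded in $\tau$ together with dominated convergence for the integral against $\mu_x$. Everything else is a direct invocation of the zonoid–determinant dictionary and of Theorem \ref{thm:main}/Corollary \ref{cor:sub}.
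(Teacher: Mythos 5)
Your proposal is correct and follows essentially the same route as the paper: pass to the Vitale zonoid, use Fubini to show $Z(Y_x)$ is the harmonic interpolation of the boundary zonoids, and invoke Theorem~\ref{thm:main} (equivalently Corollary~\ref{cor:sub}). The only cosmetic differences are that you phrase the zonoid via its (centered) support function $\tfrac12 E|Y\cdot\xi|$ rather than the set-integral $\int[0,y]\,d\nu_Y(y)$ — these are translates of each other, so the volume identity is unaffected — and that you spell out the truncation/continuity technicalities, which the paper leaves implicit.
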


Our proof relies on the connection between eads and a special class of convex sets called zonoids which was established in \cite[Thm 3.1]{Vitale91}: to any random vector $Y$ with finite expectation we may associate a zonoid  $$Z(Y):=\int_{\mathbb{R}^n}[0,y]d\nu_{v}(y).$$  Then the main result \cite[Thm.\ 3.2]{Vitale91} says that 

\begin{equation} \label{eq:zonvol}
 E|\det M_Y|=n!|Z(Y)|.   
\end{equation} 

%We are now ready to give the proof of Theorem \ref{thm:random}.

\begin{proof}[Proof of Theorem \ref{thm:random}]
Note that 
\begin{eqnarray*}
  Z(Y_x)=\int_{\mathbb{R}^n}[0,y]d\nu_{Y_x}(y)=\int_{\mathbb{R}^n}\int_{\partial{\Omega}}[0,y]d\mu_x(\tau)d\nu_{Y_{\tau}}(y)=\\=\int_{\partial{\Omega}}\int_{\mathbb{R}^n}[0,y]d\nu_{Y_{\tau}}(y)d\mu_x(\tau)=\int_{\partial{\Omega}}Z(Y_{\tau})d\mu_x(\tau),
\end{eqnarray*}
 i.e. $Z(Y_x)$ is the harmonic interpolation of $Z(Y_{\tau})$. Thanks to the volume equality (\ref{eq:zonvol}) the result  follows immediately from Theorem \ref{thm:main}. 
\end{proof}

\printbibliography
\end{document}